\numberwithin{equation}{section}
\newtheorem{thm}{Theorem}[section]
\newtheorem{lemma}[thm]{Lemma}
\newtheorem{proposition}[thm]{Proposition}
\newcommand{\eps}{\varepsilon}
\DeclareMathAlphabet{\mathpzc}{OT1}{pzc}{m}{it}
\newcommand{\mquad}[1]{\qquad \text{#1} \qquad}
\newcommand{\mqquad}[1]{\quad \text{#1} \quad}
\newcommand{\e}{\mathrm e}
\newcommand{\ud }{\, \mathrm d}
\newcommand{\sem}[1]{\left ( \e^{t#1} \right)_{t \ge 0}}
\newcommand{\dom}[1]{\mathcal D(#1)}
\newcommand{\grae}{\lim_{\eps \to 0+}}
\newcommand{\lam}{\lambda}
\newcommand{\rla}{R_\lam}
\newcommand{\grat}{\lim_{t\to \infty}}
\newcommand{\lil}{\lim_{\lam \to 0}}
\newcommand{\rez}[1]{\left (\lam - #1\right )^{-1}}
\newcommand{\mc}{\mathcal}
\newcommand{\R}{\mathbb R}
\newcommand{\rod}[1]{\mbox{$\left (#1 \right )_{t\ge 0}$}} 
\newcommand{\slam}{\sqrt{2\lam}}
\definecolor{ngreen}{HTML}{006400}
\title[]{A limit theorem for certain Feller semigroups, and the distribution of the local time for Brownian motion when it exits an interval}
\author{Adam Bobrowski} 
\address{Lublin University of Technology\\Department of Mathematics\\Nadbystrzycka 38A\\20-618 Lublin, Poland}
\author{Andrey Pilipenko}
 \address{University of Geneva\\ 
Section de math\'ematiques\\
UNI DUFOUR\\
24, rue du G\`en\`eral Dufour\\
Case postale 64\\
1211 Geneva 4, Switzerland}
\begin{document}

\newcommand{\ce}{\mathpzc c}
\newcommand{\ceep}{\mathpzc c_\eps}

\maketitle
%\vspace{-2cm}
\section{Introduction}
%\section{The main theorem}

Let $a<0$ and $b>0$ be two real numbers and let $\mathfrak C[a,b]$ denote the space of continuous function on the interval $[a,b]$. Furthermore, let $\ce \colon \R \to \R^+$  be a continuous function such that \begin{equation}\label{1:0} \int_{-\infty}^\infty \ce(x)\ud x \eqqcolon \gamma < \infty \mqquad{and} \grae \eps^{-1} \ce (\eps^{-1}x) =0, x\not = 0.\end{equation} The operator $A_0$ defined by 
 $A_0f =\frac 12 f''$ on the 
domain  $\dom{A_0}$ composed of all twice continuously differentiable functions on $[a,b]$ (with one-sided derivatives at the ends) such that $f''(a)=f''(b)=0$, is well known to generate a Feller semigroup in $\mathfrak C[a,b]$; the related process is a Brownian motion on $[a,b]$ stopped at the first moment when it hits $a$ or $b$: 
\[ \e^{tA_0} f(x) = \mathsf E_x f(w(t\wedge \tau)), \qquad x\in [a,b], f \in \mathfrak C[a,b], t \ge  0 \]
where $w$ is a standard Brownian motion, $\tau$ is the first time $w$ equals $a$ or $b$, and $\mathsf E_x$ denotes expectation conditional on the process starting at $x$. 
 By the Phillips perturbation theorem the operators $A_\eps, \eps \in (0,1]$, defined on the same domain by 
\[ A_\eps f = \tfrac 12 f'' - \ce_\eps f,\]
where $\ceep (x) = \eps^{-1} \ce (\eps^{-1}x), x\in [a,b]$ are also Feller generators. In fact,  
the Feynmann--Kac formula provides the following explicit form of the corresponding semigroups: 
\begin{equation}\label{1:1} \e^{tA_\eps}f (x) = \mathsf E_x f(w(t\wedge \tau))\e^{-\int_0^{t\wedge \tau} \ceep (w(s))\ud s },  \, x\in [a,b], f \in \mathfrak C[a,b], t \ge  0. \end{equation}
The process generated by $A_\eps$ behaves like that generated by $A_0$ but can be killed and thus no longer defined from a random time on; $\ceep (x)$ is the intensity of killing at $x\in [a,b]$. 

Since, as $\eps \to 0$, $\ceep$ approximates a multiple of Dirac delta at $0$, there exists a limit of $\e^{tA_\eps}f(x)$, at least for each $x\in [a,b],t\ge 0$ and $f$ separately, and equals
\begin{equation}\label{1:2} \mathsf E_x f(w(t\wedge \tau))\e^{-\gamma L_0(t\wedge \tau)},  \qquad x\in [a,b], f \in \mathfrak C[a,b], t \ge  0, \end{equation}
where $L_0$ denotes the L\'evy local time of Brownian motion at $0$, see \cite{ito,karatzas,rosen}.

Our main goal in this paper is to use the Trotter--Kato--Neveu convergence theorem to prove that the semigroups $\sem{A_\eps}$ indeed converge, even in the strong sense, and to characterize the domain of the generator of the limit semigroup. As an application we provide a semigroup theoretic proof of the fact that  $L_0(\tau)$ is exponentially distributed; this is usually done by stochastic analysis (see formula 2.3.3 on p.~356 in \cite{borodin}, or formula 2.3.3 on p.~362 in the 2015 corrected printing of this monograph; see also \cite{karatzas}*{(4.19), p. 429}). The main theorem can be stated as follows. 

\begin{thm}\label{thm:1} Let $\dom{A}\subset \mathfrak C[a,b]$ be composed of functions $f$ that are twice continuously differentiable in either of the subintervals $[a,0]$ and $[0,b]$ separately (with one-sided derivatives at the ends) and such that 
\begin{equation}\label{1:4} f''(a) = f''(b)=0, f''(0+)=f''(0-) \mqquad{and} f'(0+) - f'(0-) = \gamma f(0). \end{equation}
The operator $A$ defined on this domain by $Af = \frac 12 f''$ is a Feller generator. Moreover,  
\[ \grae \e^{tA_\eps} f = \e^{tA}f , \qquad t \ge 0, f\in \mathfrak C[a,b],\]
and the limit is uniform with respect to $t $ in finite intervals. 
\end{thm}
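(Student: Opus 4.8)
The theorem has three parts: (1) $A$ generates a Feller semigroup, (2) strong convergence $e^{tA_\eps}f \to e^{tA}f$, (3) uniformity in $t$ on compacts. The natural tool, already flagged by the authors, is the Trotter–Kato–Neveu theorem, so the whole argument should be organized around verifying its hypotheses: convergence of resolvents on a core, and a range/density condition that upgrades it to generation of the limit.

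Let me think about how to verify the resolvent convergence concretely. The cleanest path is to write the resolvents $R_\lambda^\eps = (\lambda - A_\eps)^{-1}$ and $R_\lambda = (\lambda - A)^{-1}$ explicitly. For the operator $\frac12 f'' $ on $[a,b]$ with the given boundary/transmission conditions at $a$, $b$, and $0$, one solves $(\lambda - \frac12 \frac{d^2}{dx^2}) u = g$ piecewise on $[a,0]$ and $[0,b]$ using the fundamental solutions $e^{\pm\sqrt{2\lambda}\,x}$ (this is where $\slam$ enters), matching the conditions in \eqref{1:4}: continuity of $u$ and $u''$ at $0$ together with the jump $u'(0+)-u'(0-) = \gamma u(0)$. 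This produces a Green's function $G_\lambda(x,y)$ which one can then recognize probabilistically as the resolvent of Brownian motion killed at rate $\gamma$ times local time at $0$, consistent with \eqref{1:2}. For $A_\eps$, one similarly has $R_\lambda^\eps g(x) = \mathsf E_x \int_0^{\tau} e^{-\lambda s - \int_0^s \ceep(w(r))\,dr} g(w(s))\,ds$; the point is to show $R_\lambda^\eps g \to R_\lambda g$ as $\eps \to 0$, either pointwise and then uniformly via equicontinuity, or directly from the explicit kernels. One feeds the assumptions \eqref{1:0} — total mass $\gamma$ and the scaling $\eps^{-1}\ce(\eps^{-1}x)\to 0$ for $x\neq 0$ — into a dominated-convergence argument, using that $\int_0^{t\wedge\tau}\ceep(w(s))\,ds \to \gamma L_0(t\wedge\tau)$ in probability (an occupation-time-formula computation, or quotable from \cite{ito,karatzas,rosen}).

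**The Trotter–Kato step and the generation claim.**

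Having resolvent convergence, I would verify that $\dom{A}$ (or a convenient subspace of it, e.g. functions vanishing near $0$ plus a one-parameter family capturing the transmission condition) is a core, and that $A$ is dissipative — the latter via the positive-maximum principle: if $f$ attains a positive maximum at an interior point $x_0\neq 0$ then $f''(x_0)\le 0$, at $x_0=0$ one uses $f'(0+)-f'(0-)=\gamma f(0)$ together with $f''(0+)=f''(0-)$ to see $Af(0)\le 0$ as well (and similarly at the endpoints, where $f''=0$ forces $Af=0$). Then the Trotter–Kato–Neveu theorem gives that $R_\lambda := \grae R_\lambda^\eps$ is the resolvent of a strongly continuous contraction semigroup whose generator $\hat A$ satisfies $R_\lambda = (\lambda-\hat A)^{-1}$; comparing with the explicit Green's function identifies $\hat A$ with $A$, and positivity/conservativeness of the approximants $e^{tA_\eps}$ (sub-Markov) passes to the limit, so the limit semigroup is Feller. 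Uniformity in $t$ on finite intervals is then automatic from the Trotter–Kato conclusion (it is part of the statement of that theorem).

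**Main obstacle.**

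I expect the real work to be the identification of the limit resolvent with the operator $A$ defined by \eqref{1:4} — that is, proving that the Green's function obtained from $R_\lambda^\eps$ in the limit is \emph{exactly} the one solving the piecewise ODE with the transmission condition $f'(0+)-f'(0-)=\gamma f(0)$ and the unexpected smoothness condition $f''(0+)=f''(0-)$, rather than some more singular object. The condition $f''(0+)=f''(0-)$ is subtle: it is forced because the limiting killing is a pure local-time (first-order) effect that does not disturb the second derivative, but making this rigorous requires carefully tracking the $\eps\to 0$ asymptotics of $u_\eps''$ near $0$, where $-\ceep u_\eps$ blows up in sup norm while its integral stays bounded. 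A secondary difficulty is checking the core/range condition of Trotter–Kato–Neveu, i.e. that $(\lambda - A)\dom{A}$ is dense (equivalently that the explicit Green's function really inverts $\lambda - A$ on all of $\mathfrak C[a,b]$), which is a direct but slightly tedious computation with the matching constants.
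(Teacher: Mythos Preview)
Your overall architecture matches the paper's: prove strong resolvent convergence and invoke Trotter--Kato--Neveu, with the generation claim for $A$ coming from the positive-maximum principle plus surjectivity of $\lambda - A$. Where you diverge is in \emph{how} you prove resolvent convergence.

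The paper's route is purely ODE-theoretic and avoids probability entirely at this stage. For each $\eps$ they build $(\lambda - A_\eps)^{-1}$ from the two fundamental solutions $k_\eps, \ell_\eps$ of $u'' = 2(\lambda + \ceep)u$ with initial data at $a$ and $b$ respectively. These are characterized as fixed points of Volterra-type integral operators, and via Bielecki norms the contraction constants are made \emph{uniform in $\eps$}. Then the continuous-dependence version of Banach's fixed point theorem gives $k_\eps \to k$, $\ell_\eps \to \ell$ once the integral operators converge strongly; the latter is a short dominated-convergence computation using only \eqref{1:0}. The limit integral equation for $k$ reads
\[
k(x) = x - a + 2\lambda \int_a^x \int_a^y k(z)\,dz\,dy + 2\gamma k(0)\, x\,[x\ge 0],
\]
so the transmission condition $k'(0+)-k'(0-)=2\gamma k(0)$ and the continuity $k''(0+)=k''(0-)$ are read off directly, with no need to track $u_\eps''$ near $0$. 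This neatly dissolves what you flagged as the ``main obstacle''.

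Your probabilistic route (Feynman--Kac plus $\int_0^t \ceep(w(s))\,ds \to \gamma L_0(t)$) is viable but has two costs. First, it only gives pointwise-in-$x$ convergence of $R_\lambda^\eps g(x)$; you acknowledge the need for an equicontinuity upgrade to get strong convergence in $\mathfrak C[a,b]$, but don't supply one. Second, once you have the limit in the form $\mathsf E_x \int_0^\tau e^{-\lambda s - \gamma L_0(s)} g(w(s))\,ds$, you still have to identify this as $(\lambda - A)^{-1}g$ with $A$ given by \eqref{1:4}, which is essentially the same ODE computation the paper does anyway. So the probabilistic detour doesn't buy much and slightly undercuts the paper's stated goal of deriving the local-time facts \emph{from} the semigroup analysis rather than feeding them in.

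One small correction to your maximum-principle sketch: at $x_0=0$ you don't argue $Af(0)\le 0$ directly. Rather, a positive maximum at $0$ forces $f'(0-)\ge 0$ and $f'(0+)\le 0$, hence $f'(0+)-f'(0-)\le 0$, contradicting $f'(0+)-f'(0-)=\gamma f(0)>0$. So the maximum cannot occur at $0$, and the principle holds trivially there.
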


Notably, the theorem describes a singular perturbation: as $\eps \to 0+$, the influence of the killing term in $A_\eps$ is stronger and stronger at $x=0$ and becomes negligible at other points.  As a result there is no killing term in $A$ itself, but the characterization of the domain of $A$ features an additional boundary condition, the one saying that $f'(0+) - f'(0-) = \gamma f(0)$; this boundary condition describes the mechanism in which the limit process is killed at $x=0$.

In Section \ref{sec:potm} we provide a proof of Theorem \ref{thm:1}.  Asymptotic behavior of $\sem{A}$ is discussed in Section \ref{sec:ass}, and it is there that the distribution of $L_0(\tau)$ is calculated. Additional results are presented in Section \ref{cito}.

\section{Proof of the main theorem}\label{sec:potm}

The Trotter--Kato--Neveu theorem (see \cites{abhn,kniga,knigazcup,goldstein,pazy}) states, roughly speaking, that semigroups converge whenever their resolvents converge. Hence, we start by calculating the resolvents of $A_\eps$s; in this part we are guided by the ideas contained in \cite{fellera3}  (see also \cite[p. 13]{fellera4}, where a reference is made to \cite[p. 483]{hillebook}).

\subsection{Resolvents of $A_\eps$s}\label{sec:tro}
Let $\lam>0$ and $\eps >0$  be fixed throughout Section \ref{sec:tro}. The resolvent of $A_\eps$ is build of two fundamental blocks: the solutions $k_\eps$ and $\ell_\eps$ of the following Cauchy problems: 
\begin{align*} k_\eps '' (x) &= 2(\lam +\ceep)k_\eps (x),\quad x\in [a,b], \qquad k_\eps(a)=0,k_\eps '(a)=1,\\
 \ell_\eps '' (x) &= 2(\lam +\ceep)\ell_\eps (x),\quad x\in [a,b], \qquad \ell_\eps (b)=0,\ell_\eps'(b)=-1;
\end{align*}
we note that dependence of $k_\eps$ and $\ell_\eps$ on $\lam$ is notationally suppressed. It is clear that $k_\eps$ and $\ell_\eps$ are solutions to these problems iff 
\begin{align*}
k_\eps (x) &= x- a + 2\int_a^x \int_a^y (\lam + \ceep (z))k_\eps (z) \ud z\ud y, \qquad x \in [a,b],\\
\ell_\eps (x) &= b -x + 2\int_x^b \int_y^b (\lam + \ceep (z))\ell_\eps (z) \ud z\ud y, \qquad \, x \in [a,b].
\end{align*}
and so the question of existence and uniqueness of solutions can be settled by examining the operators $T_\eps$ and $S_\eps$ that are defined by 
\begin{align*}
T_\eps f(x) &= x- a + 2\int_a^x \int_a^y (\lam + \ceep (z))f (z) \ud z\ud y, \qquad x \in [a,b], f \in \mathfrak C[a,b],\\
S_\eps f(x) &= b -x + 2\int_x^b \int_y^b (\lam + \ceep (z))f (z) \ud z\ud y, \qquad x \in [a,b], f\in \mathfrak C[a,b],
\end{align*}
and can be easily seen to map $\mathfrak C[a,b]$ into itself. In such a study the notion of Bielecki-type norm (see \cites{abielecki,jedenipol,edwards}) is highly instrumental.  We will use two families of such norms, indexed by $\omega>0$:
\[ \|f\|_\omega \coloneqq \max_{a\le x\le b}\e^{-\omega(x-a)}|f(x)|, \qquad \|f\|_\omega^\diamond \coloneqq \max_{a\le x\le b}\e^{\omega(x-b)}|f(x)|;\] 
it is quite obvious that they are equivalent to the standard maximum norm. 

\begin{lemma}\label{lem:1} For $f,g\in \mathfrak C[a,b]$ and $\omega >0$, we have 
\begin{align*}
\|T_\eps f - T_\eps g\|_\omega & \le 2(\tfrac \lam {\omega^2} + \tfrac \gamma \omega) \|f-g\|_\omega, \\
 \|S_\eps f - S_\eps g\|_\omega^\diamond & \le 2(\tfrac \lam {\omega^2} + \tfrac \gamma \omega) \|f-g\|_\omega^\diamond .\end{align*}
\end{lemma}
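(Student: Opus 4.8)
The plan is to work with the difference $h\coloneqq f-g$ and to use that the affine terms $x-a$ and $b-x$ in the definitions of $T_\eps$ and $S_\eps$ cancel, while the integral parts are linear. For $T_\eps$ this gives $T_\eps f(x)-T_\eps g(x)=2\int_a^x\int_a^y(\lam+\ceep(z))h(z)\ud z\ud y$, and after interchanging the order of integration over the triangle $a\le z\le y\le x$ one is left with $2\int_a^x(x-z)(\lam+\ceep(z))h(z)\ud z$. Inserting the pointwise bound $|h(z)|\le\e^{\omega(z-a)}\|h\|_\omega$, which is immediate from the definition of the Bielecki norm, and factoring $\e^{\omega(x-a)}$ out, reduces the whole matter to estimating $\int_a^x(x-z)\e^{-\omega(x-z)}(\lam+\ceep(z))\ud z$.

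The key step is then the elementary one-variable inequality $\sup_{s\ge0}s\e^{-\omega s}=(\omega\e)^{-1}\le\omega^{-1}$, applied with $s=x-z$. On the one hand it yields $\int_a^x(x-z)\e^{-\omega(x-z)}\lam\ud z\le\lam\int_0^\infty s\e^{-\omega s}\ud s=\lam\omega^{-2}$; on the other, bounding the same factor pointwise by $\omega^{-1}$ and using $\int_a^x\ceep(z)\ud z\le\int_{-\infty}^\infty\ceep(z)\ud z=\int_{-\infty}^\infty\ce(u)\ud u=\gamma$ (by positivity of $\ce$ and the substitution $u=\eps^{-1}z$) gives $\int_a^x(x-z)\e^{-\omega(x-z)}\ceep(z)\ud z\le\gamma\omega^{-1}$. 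Adding the two contributions, multiplying by the overall factor $2$, and then by $\e^{-\omega(x-a)}$ before taking the maximum over $x\in[a,b]$ produces the first inequality. The second one, for $S_\eps$ and the norm $\|\cdot\|_\omega^\diamond$, follows by repeating the computation with the roles of the endpoints interchanged; the only properties of $\ceep$ that are used are its nonnegativity and $\int_{-\infty}^\infty\ceep(z)\ud z=\gamma$, both unaffected by the reflection $z\mapsto a+b-z$, so a formal appeal to the substitution $x\mapsto a+b-x$ is equally legitimate.

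The only point requiring care — and the reason the constant stays bounded as $\eps\to0+$ — is that the singular weight $\ceep$ enters the final bound solely through its total mass $\gamma$; this is precisely what the decay factor $(x-z)\e^{-\omega(x-z)}$ produced by the double integration against the Bielecki weight makes possible. Everything else is Fubini's theorem and single-variable calculus, so I do not anticipate any real difficulty.
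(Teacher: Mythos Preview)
Your argument is correct and follows essentially the same route as the paper: insert the Bielecki bound $|h(z)|\le \e^{\omega(z-a)}\|h\|_\omega$, treat the $\lam$-part and the $\ceep$-part separately, and use that $\int_a^x \ceep \le \gamma$ for the latter. The only cosmetic difference is that you first collapse the double integral via Fubini to the kernel $(x-z)$ and then invoke $\sup_{s\ge 0} s\e^{-\omega s}\le \omega^{-1}$, whereas the paper keeps the iterated integral and bounds $\e^{\omega(z-a)}\le \e^{\omega(y-a)}$ before integrating in $y$; both computations land on the same constant $2(\lam\omega^{-2}+\gamma\omega^{-1})$.
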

\begin{proof}To prove the first estimate we write, for $x\in [a,b]$,  
\[ \e^{-\omega(x-a)} |T_\eps f(x) - T_\eps g(x)| \le 2\e^{-\omega(x-a)}\int_a^x \int_a^y (\lam + \ceep (z))\e^{\omega(z-a)} \ud z\ud y\|f-g\|_\omega,\]
and note that $\lam \e^{-\omega(x-a)}\int_a^x \int_a^y \e^{\omega(z-a)} \ud z\ud y\le \frac \lambda {\omega^2}$ whereas 
\begin{align*} \e^{-\omega(x-a)}\int_a^x \int_a^y \ceep (z)\e^{\omega(z-a)} \ud z\ud y&\le \e^{-\omega(x-a)}\int_a^x  \e^{\omega(y-a)}\int_a^y \ceep (z) \ud z\ud y\\
&\le \gamma \e^{-\omega(x-a)}\int_a^x  \e^{\omega(y-a)}\ud y\le \tfrac\gamma\omega.
\end{align*} Since the second estimate is proved analogously, we omit the details. 
  \end{proof}

It is an immediate consequence of our lemma that, by considering $T_\eps$ with norm $\|\cdot\|_\omega$ with sufficiently large $\omega$ we make $T_\eps$ a contraction, thus establishing that its fixed point $k_\eps$ exists and is unique; an analogous statement concerns $\ell_\eps$. In Section \ref{sec:coa} we will make use of the fact that the constant on the right-hand side in the estimate of Lemma \ref{lem:1} does not depend on $\eps$. As for now, however, we focus on the following lemma, a stepping stone to the proof of our convergence theorem. 

\begin{lemma}\label{lem:2}The resolvent of $A_\eps$ is given by 
\begin{equation}\label{2:1} \rez{A_\eps} g = h_\eps + \tfrac {g(a)}{(\lam +\ceep(a))\ell_\eps (a)}\ell_\eps +  \tfrac {g(b)}{(\lam +\ceep(b))k_\eps (b)}k_\eps, \qquad g \in \mathfrak C[a,b], \end{equation}
where
\[ h_\eps (x) \coloneqq \tfrac{2\ell_\eps (x)}{\ell_\eps (a)} \int_a^x k_\eps (y) g(y) \ud y + \tfrac{2k_\eps (x)}{k_\eps (b)} \int_x^b \ell_\eps (y) g(y) \ud y, \qquad x \in [a,b];\] dependence of $h_\eps$ on $\lam$ is notationally suppressed and so is, to recall, the dependence of $k_\eps$ and $\ell_\eps$.  
\end{lemma}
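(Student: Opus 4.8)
The plan is to verify by direct computation that the function $f$ given by the right-hand side of \eqref{2:1} belongs to $\dom{A_\eps}$ and satisfies $(\lam - A_\eps)f = g$. Since $A_\eps$ is a Feller generator and $\lam>0$ is fixed, $\lam - A_\eps$ maps $\dom{A_\eps}$ bijectively onto $\mathfrak C[a,b]$; therefore any such $f$ must coincide with $\rez{A_\eps}g$, which is exactly the assertion of the lemma.

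Before the computation I would record the properties of $k_\eps$ and $\ell_\eps$ that will be used. Their existence and uniqueness follow from Lemma \ref{lem:1} and the Banach fixed point theorem, as already noted. Because $\lam + \ceep \ge \lam>0$, inspection of the integral equations (equivalently, a short convexity argument: wherever $k_\eps>0$ it is convex, so it cannot return to $0$ after leaving it with positive slope) shows that $k_\eps$ is positive and strictly increasing on $(a,b]$, and $\ell_\eps$ is positive and strictly decreasing on $[a,b)$; in particular $k_\eps(b)>0$ and $\ell_\eps(a)>0$, and since also $\lam + \ceep(a),\ \lam + \ceep(b)>0$, all denominators occurring in \eqref{2:1} and in the definition of $h_\eps$ are nonzero. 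Finally, the Wronskian $W\coloneqq k_\eps\ell_\eps' - k_\eps'\ell_\eps$ has derivative $W' = k_\eps\ell_\eps'' - k_\eps''\ell_\eps = 2(\lam+\ceep)(k_\eps\ell_\eps-\ell_\eps k_\eps)=0$, hence is constant, and evaluating it at $x=a$ and at $x=b$ gives
\[ W \equiv -\ell_\eps(a) = -k_\eps(b). \]
The equality $\ell_\eps(a)=k_\eps(b)$ is the algebraic fact that makes the rest of the argument fit together.

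Writing $w\coloneqq \ell_\eps(a)=k_\eps(b)$, $P(x)\coloneqq\int_a^x k_\eps(y)g(y)\ud y$ and $Q(x)\coloneqq\int_x^b \ell_\eps(y)g(y)\ud y$, one has $h_\eps = \tfrac 2w(\ell_\eps P + k_\eps Q)$. Differentiating, using $P'=k_\eps g$, $Q'=-\ell_\eps g$, and observing that the two terms proportional to $\ell_\eps k_\eps g$ cancel, yields $h_\eps' = \tfrac 2w(\ell_\eps' P + k_\eps' Q)$; differentiating once more and invoking $k_\eps'' = 2(\lam+\ceep)k_\eps$, $\ell_\eps''=2(\lam+\ceep)\ell_\eps$ together with the Wronskian identity produces $h_\eps'' = 2(\lam+\ceep)h_\eps - 2g$, i.e.\ $(\lam - A_\eps)h_\eps = g$ in the classical sense. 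Since $k_\eps$ and $\ell_\eps$ solve the homogeneous equation, the full $f$ also satisfies $(\lam - A_\eps)f=g$, and it is manifestly twice continuously differentiable on $[a,b]$. It then remains to check the boundary conditions: from $h_\eps'' = 2(\lam+\ceep)h_\eps - 2g$ and $h_\eps(a)=0$ (as $k_\eps(a)=0$ and $P(a)=0$) we get $h_\eps''(a) = -2g(a)$, and symmetrically $h_\eps''(b) = -2g(b)$; since $k_\eps''(a)=2(\lam+\ceep(a))k_\eps(a)=0$ and $\ell_\eps''(b)=2(\lam+\ceep(b))\ell_\eps(b)=0$, only the $\ell_\eps$-term contributes to $f''(a)$ and only the $k_\eps$-term to $f''(b)$, and the coefficients in \eqref{2:1} are calibrated precisely so that these contributions cancel $-2g(a)$ and $-2g(b)$, giving $f''(a)=f''(b)=0$. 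Hence $f\in\dom{A_\eps}$ and the proof is complete. I do not anticipate any genuine obstacle here; the only point demanding care is the bookkeeping in the two successive differentiations of $h_\eps$ and the sign conventions in the Wronskian.
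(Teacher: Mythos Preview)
Your proof is correct and follows essentially the same approach as the paper: compute the Wronskian to obtain $\ell_\eps(a)=k_\eps(b)$, differentiate $h_\eps$ twice to verify $h_\eps''=2(\lam+\ceep)h_\eps-2g$, and then check the boundary conditions $f''(a)=f''(b)=0$ by balancing the added $\ell_\eps$- and $k_\eps$-terms against the $-2g(a)$ and $-2g(b)$ contributions. The only notable addition is your explicit justification that $k_\eps(b),\ell_\eps(a)>0$ so the denominators make sense, a point the paper leaves implicit.
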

\begin{proof} Let $\mc A_\eps$ be defined by $\mc A_\eps f = \frac 12 f'' - \ceep f$ on the domain equal to the subspace $\mathfrak C^2[a,b]\subset \mathfrak C[a,b]$ of twice continuously differentiable functions on $[a,b]$, so that $\mc A_\eps$ is an extension of $A_\eps$. A straightforward calculation shows that $h_\eps$ belongs to $\mathfrak C^2[a,b]$ and $\lam h_\eps - \mc A_\eps h_\eps = g$. In this calculation it is essential to note that $u_\eps \coloneqq k_\eps' \ell_\eps - k_\eps \ell_\eps'$ in fact does not depend on $x\in [a,b]$, and so $u_\eps (x) = u_\eps (a)=u_\eps (b) = \ell_\eps (a) = k_\eps (b)$; to see that $u$ is a constant function  simply check that $u_\eps' =0$.  Since, by definition, $k_\eps$ and $\ell_\eps$ are eigenvalues of $\mc A_\eps$ corresponding do $\lam$, we conclude that $f_\eps$ defined to be the right-hand side of \eqref{2:1} satisfies $\lam f_\eps - \mc A_\eps f_\eps = g$. 

This leaves us with showing that $f_\eps $ belongs to $\dom{A_\eps}$. But, by the already proved part, $h_\eps'' (a) = 2(\lam +\ceep)h_\eps (a) - 2g(a)= -2g(a)$. Similarly, 
$\ell_\eps'' (a) = 2(\lam +\ceep (a))\ell_\eps (a) $ and $k_\eps ''(a) = 2(\lam +\ceep(a))k_\eps (a) = 0$, resulting in $f''_\eps (a)=0$. A similar calculation shows that $f_\eps ''(b)=0$, thus completing the proof. \end{proof}

\subsection{Convergence of resolvents}\label{sec:coa}

It is clear from Lemma \ref{lem:2} that convergence of $\rez{A_\eps}$, as $\eps \to 0$, hinges on the convergence of $k_\eps$ and $\ell_\eps$. This is established in our next result. 

\begin{lemma}\label{lem:3} As $\eps \to 0$, $k_\eps $ converges, uniformly in $x\in [a,b]$,  to a unique function $k$ such that 
\begin{equation}\label{2:2} k(x) = x- a + 2\lam \int_a^x \int_a^y k(z)\ud z + 2\gamma k(0)x[x\ge 0], \qquad x\in [a,b],\end{equation}
where $[\cdot]$ is the Iverson bracket. Similarly, $\ell_\eps$ converges to a unique function $\ell$ such that
\[ \ell (x) = b- x + 2\lam \int_x^a \int_y^b \ell (z)\ud z - 2\gamma \ell(0)x[x\le 0], \qquad x\in [a,b].\]
\end{lemma}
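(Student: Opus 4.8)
The plan is to rewrite the defining integral equation for $k_\eps$ as a fixed-point equation and pass to the limit directly, using the uniform (in $\eps$) contraction constant from Lemma \ref{lem:1}. First I would recall that $k_\eps = T_\eps k_\eps$, and split the kernel: $(\lam+\ceep)k_\eps = \lam k_\eps + \ceep k_\eps$. The term $2\int_a^x\int_a^y \lam k_\eps(z)\ud z\ud y$ will converge to $2\lam\int_a^x\int_a^y k(z)\ud z\ud y$ once we know $k_\eps\to k$ uniformly, so the crux is the behavior of $2\int_a^x\int_a^y \ceep(z)k_\eps(z)\ud z\ud y$. Interchanging the order of integration, this equals $2\int_a^x (x-z)\ceep(z)k_\eps(z)\ud z$, and since $\ceep$ concentrates mass $\gamma$ at $0$ (by \eqref{1:0}) while $k_\eps$ is, as I will show, uniformly bounded and equicontinuous, this integral converges to $2\gamma (x-0)k(0)\,[x\ge 0] = 2\gamma k(0)x[x\ge 0]$. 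So the limit equation \eqref{2:2} is exactly what one obtains by taking $\eps\to 0$ in $k_\eps = T_\eps k_\eps$.

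To make this rigorous I would proceed in the following steps. Step one: uniform bounds. From Lemma \ref{lem:1} with $\omega$ fixed so large that $q := 2(\tfrac\lam{\omega^2}+\tfrac\gamma\omega) < 1$ — a choice independent of $\eps$ — and from $T_\eps 0(x) = x-a$, one gets $\|k_\eps\|_\omega \le \|T_\eps 0\|_\omega + q\|k_\eps\|_\omega$, hence $\|k_\eps\|_\omega \le (b-a)/(1-q)$; equivalence of norms then yields a bound on $\sup_x|k_\eps(x)|$ uniform in $\eps$. Step two: equicontinuity. Since $k_\eps' (x) = 1 + 2\int_a^x(\lam+\ceep(z))k_\eps(z)\ud z$ and $\int_a^x \ceep(z)\ud z \le \gamma$, the derivatives $k_\eps'$ are uniformly bounded, so the family $\{k_\eps\}$ is equicontinuous; by Arzelà--Ascoli every sequence $\eps_n\to 0$ has a uniformly convergent subsequence. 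Step three: identify the limit. Along such a subsequence, pass to the limit in the integral equation as sketched above; the delicate point is the concentration argument for $\int_a^x(x-z)\ceep(z)k_{\eps_n}(z)\ud z \to \gamma x k(0)[x\ge 0]$, which I would handle by writing $k_{\eps_n}(z) = k_{\eps_n}(0) + (k_{\eps_n}(z)-k_{\eps_n}(0))$, using the second condition in \eqref{1:0} together with $\int \ceep = \gamma$ for the first piece and the uniform modulus of continuity for the remainder. Step four: uniqueness. Equation \eqref{2:2} is itself a fixed-point equation $k = Tk$ for the operator $T f(x) = x-a + 2\lam\int_a^x\int_a^y f(z)\ud z\ud y + 2\gamma f(0)x[x\ge 0]$, and the same Bielecki-norm estimate (now with the $\ceep$-term replaced by the single point mass, which contributes at most $\gamma/\omega$ to the relevant bound, exactly matching the estimate in Lemma \ref{lem:1}) shows $T$ is a contraction for large $\omega$; hence the limit is unique and, consequently, the whole family $k_\eps$ converges, not merely subsequences.

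The treatment of $\ell_\eps$ is entirely analogous, using $S_\eps$, the norm $\|\cdot\|_\omega^\diamond$, and the orientation-reversed integration by parts that produces the $-2\gamma\ell(0)x[x\le 0]$ term. The main obstacle I anticipate is the concentration step (Step three): one must be careful that the double integral, after switching order, genuinely localizes the mass of $\ceep$ at $0$ with the correct total weight $\gamma$ and the correct geometric factor $x$, and that the value picked out is $k(0)$ and not some one-sided limit — this is where equicontinuity of $\{k_\eps\}$ and the normalization $\int_{-\infty}^\infty\ce = \gamma$ are both essential. Everything else is a routine application of the contraction-mapping machinery already set up in Lemmas \ref{lem:1} and \ref{lem:2}.
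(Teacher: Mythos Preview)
Your proposal is correct and the endpoint is the same, but the architecture differs from the paper's. The paper does not go through Arzel\`a--Ascoli, equicontinuity, or subsequences at all. Instead it invokes a standard stability lemma for contractions: since by Lemma~\ref{lem:1} the maps $T_\eps$ share a common contraction constant $q<1$ in the Bielecki norm, it suffices to show that $T_\eps f\to Tf$ strongly for \emph{every} $f\in\mathfrak C[a,b]$, where $T$ is the operator whose fixed point is \eqref{2:2}; the fixed points then automatically converge to the fixed point of $T$. The concentration argument is therefore carried out once, for a generic $f$, via dominated convergence on the inner integral, and the work on the specific sequence $k_\eps$ is bypassed entirely.

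Your route---uniform bounds, equicontinuity, compactness, identification of subsequential limits, then uniqueness to upgrade to full convergence---is more hands-on and avoids citing the fixed-point stability lemma, which may be preferable if one wants a self-contained argument. The paper's route is shorter and cleaner: no a~priori derivative bound, no compactness, and the concentration step is slightly simpler because one works with a fixed continuous $f$ rather than the moving target $k_{\eps_n}$ (so your splitting $k_{\eps_n}(z)=k_{\eps_n}(0)+(k_{\eps_n}(z)-k_{\eps_n}(0))$ and the appeal to the uniform modulus of continuity become unnecessary). Both arguments ultimately rest on the same analytic fact, namely that $\int_a^x\int_a^y\ceep(z)f(z)\ud z\ud y\to\gamma f(0)\,x\,[x\ge 0]$, and your interchange-of-integration rewrite $\int_a^x(x-z)\ceep(z)f(z)\ud z$ is a perfectly good way to see this.
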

\begin{proof} To prove the first statement, we start by noting that, by Lemma \ref{lem:1}, the operators $T_\eps, \eps \in (0,1]$ have a common contraction constant provided that we consider $\mathfrak C[a,b]$ with the Bielecki-type norm with sufficiently large $\omega$. 
Hence, by the well-known extension of the Banach Fixed Point Theorem (see e.g. \cites{jedenipol,zmarkusem2b}),
 to prove that fixed points $k_\eps$ of $T_\eps$ converge it suffices to check that there is a strong limit, say, $T$, of $T_\eps$:
 in such a scenario the limit of fixed points is the fixed point of the limit operator. In other words, we are left with proving that 
$\grae T_\eps f = Tf, f \in \mathfrak C[a,b]$ where
\[ Tf(x) \coloneqq  x- a + 2\lam \int_a^x \int_a^y f(z)\ud z + 2\gamma f(0)x[x\ge 0], \qquad x\in [a,b].\]

 To this end, we first write 
\[ \left |\int_a^0 \int_a^y \ceep (z) f(z) \ud z \ud y \right | \le  \int_a^0 \int_a^y \ceep (z) \ud z \ud y \|f\|\]
 and note that $\int_a^y \ceep (z) \ud z\le \gamma$ and $\grae \int_a^y \ceep (z)\ud z =0$ for $y<0$. Hence, by the Lebesgue dominated convergence theorem, both expressions in the display above converge to $0$, as $\eps \to 0$. This further reduces our aim to showing that $\int_0^x |\int_a^y \ceep (z) f(z) \ud z - \gamma f(0)|\ud y$ converges uniformly to $0$, or, simply that $\int_0^b |\int_a^y \ceep (z) f(z) \ud z - \gamma f(0)|\ud y$ converges to $0$. 
But, again, the integrands here are bounded by $2\gamma \|f\|$ and converge to $0$: indeed, for $y>0$, $\grae \int_a^y \ceep (z) f(z)\ud z  = \grae \int_{\eps^{-1}a}^{\eps^{-1}b} \ce (z) f(\eps z) \ud z=\gamma f(0)$ by the Lebesgue dominated convergence theorem.

We omit the proof of the second statement, because it is analogous to that of the first one. 
\end{proof}

Here is another characterization of $k$ and $\ell$ introduced in Lemma \ref{lem:3}. Relation \eqref{2:2} shows that $k$ is twice continuously differentiable and satisfies $k'' = 2\lam k$ in either of the intervals $[a,0]$ and $[0,b]$; therefore, since $k$ belongs to $\mathfrak C[a,b]$,  so does $k''$ and in particular $k''(0+) =k''(0-)$. Also, \[ k(a)=0, k'(a)=1 \mquad{and} k'(0+) - k'(0-)= 2\gamma k(0).\] Similarly, $\ell$ satisfies 
$\ell'' = 2\lam \ell$ in either of the intervals $[a,0]$ and $[0,b]$, and we have 
\[ \ell(b)=0, \ell'(b)=-1 \mquad{and} \ell'(0+) - \ell'(0-)= 2\gamma \ell(0).\]
This yields the following explicit form of $k$ and $\ell$:
\begin{align}
k(x) &= \tfrac 1{\slam}\sinh\slam (x-a) - \tfrac \gamma \lambda \sinh\slam a \sinh \slam x[x\ge 0], \nonumber \\
\ell (x) &= \tfrac 1{\slam}\sinh\slam (b-x) - \tfrac \gamma \lambda \sinh\slam b \sinh \slam x[x\le 0].\label{2:3}
 \end{align}
\subsection{Proof of Theorem \ref{thm:1}}\label{sec:proof}
It is clear from Lemma \ref{lem:2} and Lemma \ref{lem:3} that the strong limit $\rla\coloneqq \grae \rez{A_\eps}$ exists (the second part of assumption \eqref{1:0} is used here) and is given by
\begin{equation}\label{2:4} \rla g = h + \tfrac {g(a)}{ \lam\ell (a)}\ell  +  \tfrac {g(b)}{ \lam k (b)}k, \qquad g \in \mathfrak C[a,b], \end{equation}
where
\[ h (x) \coloneqq \tfrac{2\ell (x)}{\ell (a)} \int_a^x k (y) g(y) \ud y + \tfrac{2k (x)}{k(b)} \int_x^b \ell  (y) g(y) \ud y, \qquad x \in [a,b]. \]
A calculation similar to that presented in Lemma \ref{lem:2} shows that $\rla g$ belongs to the domain of $A$, as defined in Theorem \ref{thm:1}, and that  $\lam \rla g - A\rla g = g$, that is, that the resolvent equation $\lam f - Af =g$ has a solution for every $g\in \mathfrak C[a,b]$. Since $A$ is obviously densely defined and satisfies the positive-maximum principle, $A$ is a Feller generator (see e.g.  \cites{bass,kniga,ethier,kallenbergnew}). In particular, the solution to the resolvent equation is unique and we have $\rez{A}=\rla$. This means that $\grae \rez{A_\eps} = \rez{A}$ strongly,  completing the proof of our main result by the Trotter--Kato--Neveu theorem.

\section{Asymptotic behavior of $\sem{A}$ and the distribution of $L_0(\tau)$}\label{sec:ass}

On the one hand, Theorem \ref{thm:1} says that, for each $f\in \mathfrak C[a,b]$, $\e^{tA_\eps}f (x)$ converges to $\e^{tA}f(x)$ uniformly with respect to $x\in [a,b]$ and $t$ in finite intervals. On the other hand, as already mentioned, $\e^{tA_\eps} f(x), $ given by \eqref{1:1} converges to \eqref{1:2} for each $x\in [a,b], t \ge 0$. It follows that 
\[ \e^{tA}f(x) =  \mathsf E_x f(w(t\wedge \tau))\e^{-\gamma L_0(t\wedge \tau)},  \qquad x\in [a,b], f \in  \mathfrak C[a,b], t \ge  0. \]
We will use this relation to find the distribution of $L_0(\tau)$. 

To this end, we note that a Brownian motion started at a point $x\in \R$ exits any interval in which $x$ is contained in a finite time  with probability $1$. It follows that, almost surely, we have $\grat w(t\wedge \tau)=w(\tau)$, and so, by the Lebesgue dominated convergence theorem, \begin{equation}\label{3:1} \grat \e^{tA}f(x) = \mathsf E_x f(w(\tau)) \e^{-\gamma L_0(\tau)}, \qquad  x\in [a,b], f\in \mathfrak C[a,b].\end{equation}
But we also have a different way to find this limit: since the resolvent of the generator is the Laplace transform of the related semigroup, and the trajectories of the semigroup are continuous functions of $t$, we have
\[ \grat \e^{tA}f (x) =  \lil\lam \int_0^\infty \e^{-\lam t} \e^{tA}f(x)\ud t= \lil\lam \rez{A}f(x). \]
In this new form the limit can be easily calculated explicitly as follows. 

\begin{lemma}For each $x\in [a,b]$ and $f\in \mathfrak C[a,b]$,
\[ \lil \lam \rez{A}f(x) = f(a) \frac{b-x -2\gamma bx [x\le 0]}{b-a - 2\gamma ab} + f(b) \frac{x-a -2\gamma ax [x\ge 0]}{b-a - 2\gamma ab}.  \]
 \end{lemma}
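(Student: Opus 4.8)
The plan is to pass to the limit directly in the explicit resolvent formula \eqref{2:4}. Since $\rez{A}=\rla$, multiplying \eqref{2:4} by $\lam$ gives, for every $g\in\mathfrak C[a,b]$ and $x\in[a,b]$,
\[ \lam\rez{A} g(x) = \lam h(x) + \frac{g(a)}{\ell(a)}\,\ell(x) + \frac{g(b)}{k(b)}\,k(x),\]
so the whole task reduces to determining the behaviour, as $\lam\to 0$, of $k(x)$, $\ell(x)$ and of the constant $\ell(a)=k(b)$.

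Second, I would read these limits off the closed forms \eqref{2:3}: using $\tfrac 1{\slam}\sinh\slam t \to t$ and $\tfrac\gamma\lam\sinh\slam a\,\sinh\slam x \to 2\gamma ax$ (because $\sinh\slam s = \slam s + O(\lam^{3/2})$), one obtains
\[ k(x) \longrightarrow k_0(x) \coloneqq x - a - 2\gamma ax[x\ge 0], \qquad \ell(x) \longrightarrow \ell_0(x) \coloneqq b - x - 2\gamma bx[x\le 0],\]
and in particular $\ell(a)=k(b) \to b - a - 2\gamma ab$. The same two limiting functions can be identified without recourse to \eqref{2:3}, as the piecewise-linear solutions of the $\lam=0$ versions of the Cauchy problems characterising $k$ and $\ell$ (cf. the paragraph following Lemma \ref{lem:3}). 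Either way, it is worth noting that the limit $b-a-2\gamma ab$ is strictly positive, since $b-a>0$ and $-2\gamma ab\ge 0$ (as $a<0<b$ and $\gamma\ge 0$), so dividing by it in the limit is legitimate.

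Finally, because $k$, $\ell$ and the integrals $\int_a^x k(y)g(y)\ud y$ and $\int_x^b\ell(y)g(y)\ud y$ all stay bounded as $\lam\to 0$ while $\ell(a)=k(b)$ stays bounded away from $0$, the function $h$ stays bounded, and therefore $\lam h(x)\to 0$. The two remaining terms then converge to
\[ g(a)\,\frac{b - x - 2\gamma bx[x\le 0]}{b-a-2\gamma ab} + g(b)\,\frac{x - a - 2\gamma ax[x\ge 0]}{b-a-2\gamma ab},\]
which is exactly the asserted formula (with $f$ in place of $g$). No step here presents a real obstacle: the only points that need a little care are the bookkeeping of the Iverson brackets near $x=0$ and the non-vanishing of the denominator $b-a-2\gamma ab$ just noted. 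As a sanity check, the displayed right-hand side returns $f(a)$ at $x=a$ and $f(b)$ at $x=b$, as any limit of $\lam\rez{A}$ must.
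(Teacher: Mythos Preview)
Your proposal is correct and follows essentially the same route as the paper: both multiply \eqref{2:4} by $\lam$, read off $\lil k$ and $\lil \ell$ from \eqref{2:3}, and show $\lam h\to 0$. Your treatment of the last point is in fact a bit cleaner---you simply observe that $h$ stays bounded (since $k,\ell$ converge and $\ell(a)=k(b)$ stays away from~$0$), whereas the paper bounds $\ell(x)/\ell(a)\le 1$ via monotonicity and then explicitly computes $\int_a^b k(y)\ud y$ to reach the same conclusion.
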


\begin{proof}In Section \ref{sec:proof} we have established that $\rla $ of \eqref{2:4} is just another notation for $\rez{A}$, and so we need to find the limit, as $\lam \to 0$, of the right hand side of \eqref{2:4} (with $g$ replaced by $f$) multiplied by $\lam$.

To this end, we claim first that $\lil\lam h=0$ (we recall that, to shorten and simplify formulae, dependence of $h,k$ and $\ell$ on $\lam$ has been notationally suppressed). Indeed, since $\ell$ is a decreasing function, the first term in the definition of $h$ does not exceed $2\int_a^x k(y) f(y)\ud y$ and this can be further estimated by $2\|f\|\int_a^b k(y)\ud y$ because $k$ is positive. Moreover, by \eqref{2:3},
\begin{align*} \int_a^b k(y) \ud y %&= \tfrac 1{\slam} \int_a^b \sinh \slam (x-a)\ud x - \tfrac \gamma \lam \sinh \slam a \int_0^b \sinh \slam x\ud x  \\
&= \tfrac {\cosh \slam (b-a)-1}{2\lam} - \tfrac {2\gamma}{\slam} \sinh \slam a \tfrac{ \cosh \slam b - 1}{2\lam}\end{align*}
implying that $\lil \lam \int_a^b k(x)\ud x =0$. This shows that the first term in the definition of $h$, when multiplied by $\lam$, converges to $0$, as $\lam \to 0$. Since an analogous proof leads to the conclusion that the same is true about the second term, our claim is established. 

We are thus left with studying the limits $\lil k$ and $\lil \ell$.  This, however, is straightforward by \eqref{2:3}. To wit, for $x\in [a,b]$,  
\[ \lil k(x) = x-a -2\gamma ax [x\ge 0] \mqquad{and} \lil \ell(x) = b-x -2\gamma bx [x\le 0], \]
as desired. \end{proof}

The lemma reveals that, for  $x\in [a,b]$ and $ f\in \mathfrak C[a,b]$,
 \begin{equation} \label{3:2} \mathsf E_x f(w(\tau)) \e^{-\gamma L_0(\tau)}=  f(a) \ell_* (x) + f(b) k_* (x), \qquad x\in [a,b],
 \end{equation}
where 
\[ \ell_* (x) \coloneqq \frac{b-x -2\gamma bx [x\le 0]}{b-a - 2\gamma ab} \mqquad{and} k_*(x)\coloneqq \frac{x-a -2\gamma ax [x\ge 0]}{b-a - 2\gamma ab}.\] 
Thus, choosing  $f(y)=1, y\in [a,b]$  yields
 \begin{equation} \label{3:3} \mathsf E_x \e^{-\gamma L_0(\tau)}=  \ell_* (x) + k_* (x)= \tfrac{b-a -\gamma (a+b)x +\gamma (b-a)|x|}{b-a - 2\gamma ab}, \qquad x\in [a,b],
 \end{equation}
and in particular  
\[  \mathsf E_0 \e^{-\gamma L_0(\tau)} = \frac{b-a}{b-a - 2\gamma ab}= \frac 1{1 + \frac{2(-a)b}{b-a}\gamma}.\] 
Since $\gamma$ can be chosen to be any positive number, we have proved that the Laplace transform of $L_0(\tau)$ coincides with that of an exponential random variable with parameter $\tfrac{2(-a)b}{b-a}$. In the specific case of $a=-b$, the parameter equals $b$.

The other quantities featured in \eqref{3:2} can also be plausibly interpreted: for example, $\ell_*(x)$
is the probability that the process generated by $A$,  started at $x\in [a,b]$, will reach $a$ and will be captured there before being  captured at $b$ or killed at $0$. Also, $x\mapsto \mathsf E_x \e^{-\gamma L_0(\tau)}$, as calculated in \eqref{3:3}, is an affine function of $x$ in either of the subintervals $[a,0]$ and $[0,b]$, and can be written as follows:
\begin{align*}
\mathsf E_x \e^{-\gamma L_0(\tau)} &= \tfrac xb + \tfrac {b-x}b\tfrac{b-a}{b-a-2\gamma ab}, \qquad x \ge 0 \\
\mathsf E_x \e^{-\gamma L_0(\tau)} &= \tfrac xa + \tfrac {a-x}a\tfrac{b-a}{b-a-2\gamma ab}, \qquad x \le 0. 
 \end{align*}
The first of these relations is a reflection of the fact that there are two possibilities for a process starting at $x\in (0,b)$ to be stopped at the ends of $[a,b]$.  It can either reach $b$ directly, without going through zero, in which case the local time at $0$ accumulated up to $\tau$ is zero; the probability of this case is $\frac xb$.  With probability $\frac {b-x}b$, however, it can go through $0$ first, and the local time accumulated at $\tau$ is then $  \mathsf E_0 \e^{-\gamma L_0(\tau)} =\tfrac{b-a}{b-a-2\gamma ab}$. Interpretation of the other relation is analogous. 

 \section{Convergence in the operator norm}\label{cito}%\(\lim_{t\to \infty} \e^{tA}\) exists in the strong sense?}
Relations \eqref{3:1} and \eqref{3:2} combined state that 
\[ \grat \e^{tA} f(x) = f(a) \ell_* (x) + f(b) k_* (x) \]
but our arguments establish this convergence merely in pointwise sense. Our first  goal in this section is to show the following stronger result. 

\begin{proposition}\label{pro:cito1} Let $P$ be the operator mapping the space $\mathfrak C[a,b]$ into itself and given by $Pf=f(a)\ell_* + f(b)k_*$. There are constants $K\ge 0$ and $\kappa >0$ such that 
\[ \|\e^{tA} - P\|\le K \e^{-\kappa t}, \qquad t \ge 0.\] 
\end{proposition}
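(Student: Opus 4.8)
The plan is to deduce the exponential rate of convergence from the spectral properties of the generator $A$, exploiting the fact that $P$ is the spectral projection of $A$ onto the eigenspace associated with the eigenvalue $0$, while the rest of the spectrum is bounded away from the imaginary axis. More precisely, I would first identify $P$ as a projection: the two functions $\ell_*$ and $k_*$ lie in $\dom A$ and satisfy $A\ell_* = Ak_* = 0$ (they are affine on each of $[a,0]$ and $[0,b]$ and one checks the interface condition $f'(0+)-f'(0-)=\gamma f(0)$ directly from their explicit forms), so $\mathrm{ran}\,P$ is contained in $\ker A$; and $Pf = f$ whenever $f\in\ker A$ because a function killed by $A$ is affine on each side, determined by its boundary values, hence equals $f(a)\ell_*+f(b)k_*$. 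Thus $P^2=P$, $AP=PA=0$, and $\e^{tA}P = P$ for all $t\ge 0$.

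Next I would analyze the semigroup on the complementary subspace $\mathrm{ran}(I-P) = \ker P = \{f : f(a)=f(b)=0\}$. This subspace is $\e^{tA}$-invariant (since $P$ commutes with $\e^{tA}$), and the restriction of $A$ to it generates the restricted Feller semigroup there. The key estimate is that the resolvent $R_\lambda = (\lambda - A)^{-1}$, given by \eqref{2:4}, extends analytically past $\lambda = 0$ when restricted to $\ker P$: the only obstruction to $R_\lambda$ blowing up as $\lambda\to 0$ is the $\frac{g(a)}{\lambda\ell(a)}\ell + \frac{g(b)}{\lambda k(b)}k$ terms, which vanish identically when $g(a)=g(b)=0$, and the remaining term $h$ has a finite limit as $\lambda\to 0$ (as was already shown in the proof of the preceding lemma, where $\lambda h\to 0$, so a fortiori $h$ itself stays bounded — in fact $h$ converges). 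One then shows, by the same Bielecki-norm/fixed-point machinery together with the explicit formulas \eqref{2:3} for $k$ and $\ell$, that $R_\lambda$ restricted to $\ker P$ remains a bounded analytic family on a neighborhood of $0$, and hence on a half-plane $\{\mathrm{Re}\,\lambda > -\kappa\}$ for some $\kappa>0$; combined with a uniform bound $\|R_\lambda\| \le C/|\lambda+\kappa|$ on vertical lines (available from the Feller contraction property after a shift), the Gearhart–Prüss type theorem, or more simply the fact that the restricted semigroup is eventually norm-continuous — being a perturbation of the analytic Brownian semigroup — yields $\|\e^{tA}|_{\ker P}\| \le K\e^{-\kappa t}$.

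Putting the pieces together: for arbitrary $f\in\mathfrak C[a,b]$ write $f = Pf + (I-P)f$; then $\e^{tA}f - Pf = \e^{tA}(Pf) + \e^{tA}(I-P)f - Pf = Pf + \e^{tA}(I-P)f - Pf = \e^{tA}(I-P)f$, so $\|\e^{tA}f - Pf\| = \|\e^{tA}(I-P)f\| \le K\e^{-\kappa t}\|(I-P)f\| \le K\|I-P\|\,\e^{-\kappa t}\|f\|$, which is exactly the claimed operator-norm bound (with $K$ renamed to absorb $\|I-P\|$).

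The main obstacle I anticipate is making the spectral gap argument fully rigorous in the $C[a,b]$ setting, where Hilbert-space spectral theory is unavailable: one must locate the spectrum of $A$ carefully and justify the exponential decay of the restricted semigroup from resolvent bounds. A cleaner route, which I would pursue in parallel, is purely probabilistic and avoids spectral theory altogether: the restricted semigroup acting on $\ker P$ is dominated by the Feller semigroup of Brownian motion absorbed at $a$ and $b$ (killing can only decrease the sup-norm), and for that absorbed-Brownian semigroup $\e^{tA_0}$ one has the classical geometric ergodicity estimate $\|\e^{tA_0}g\| \le \e^{-\kappa t}\|g\|$ for $g$ vanishing at both endpoints, with $\kappa = \pi^2/(2(b-a)^2)$ the principal Dirichlet eigenvalue, obtained from $\mathsf P_x(\tau > t)\le C\e^{-\kappa t}$ uniformly in $x$. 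Since $\|\e^{tA}g(x)\| = |\mathsf E_x g(w(t\wedge\tau))\e^{-\gamma L_0(t\wedge\tau)}| \le \mathsf E_x|g(w(t\wedge\tau))|$ and $g(w(t\wedge\tau)) = 0$ on $\{\tau\le t\}$ when $g$ vanishes at $a,b$, this gives the decay directly, and the proof concludes as above.
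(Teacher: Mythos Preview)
Your proposal is correct, and the ``cleaner route'' you sketch at the end is precisely the paper's own argument: the paper identifies $\ker P=\mathfrak C_0(a,b)$, notes that this subspace is $\e^{tA}$-invariant (from the resolvent formula), dominates $\e^{tA}|f|$ pointwise by the absorbed-Brownian semigroup $\e^{tB}|f|$ for $f\in\mathfrak C_0(a,b)$, cites the known exponential decay $\|\e^{tB}\|\le K_0\e^{-\kappa t}$, observes $\e^{tA}k_*=k_*$ and $\e^{tA}\ell_*=\ell_*$, and concludes exactly as you do with $\|\e^{tA}f-Pf\|=\|\e^{tA}(f-Pf)\|\le(1+\|P\|)K_0\e^{-\kappa t}\|f\|$. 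Your primary spectral route via resolvent analyticity near $\lambda=0$ and a Gearhart--Pr\"uss/eventual-norm-continuity argument is genuinely different and, as you note, heavier in the $C[a,b]$ setting; it could be made to work (the resolvent is compact, so the semigroup is eventually compact and the spectral mapping theorem applies), but the paper bypasses all of this in favor of the direct probabilistic comparison you give as the alternative.
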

\begin{proof} Let $\mathfrak C_0(a,b)\coloneqq \{f \in \mathfrak C[a,b]\colon f(a)=f(b)=0\}$ and let $B$ be the operator in this space defined as follows: $Bf=\frac 12 f''$ on the  domain composed of twice continuously differentiable functions $f\in \mathfrak C_0(a,b)$ such that $f''$ belongs to $\mathfrak C_0(a,b)$. It is well-known (see e.g. \cite{knigazcup}*{p. 181}) that $B$ generates a Feller semigroup (the related process is a Brownian motion on the interval $(a,b)$, killed upon first hitting either end), and that there are constants $K_0\ge 0$ and $\kappa >0$ such that 
\begin{equation*}\|\e^{tB}\|\le K_0 \e^{-\kappa t}, \qquad t \ge 0. \end{equation*}
%We will show that our proposition holds with these particular constants. 

Also, we recall that $\rez{A}=\rla$ where $\rla $ is given in \eqref{2:4}, and that $\rla g(a)=\rla g(b) =0$ provided that $g(a)=g(b)=0$; indeed, $h(a)=h(b)=0$ because $\ell (b)=0$ and $k(a)=0$. It follows that $\mathfrak C_0(a,b)$ is left invariant by $\sem{A}$. Now, as restricted to this space, $\sem{A}$ describes a Brownian motion which can be killed either  when it reaches one of interval's ends or when its local time at $0$ exceeds an exponential time. Since the process generated by $B$  can be killed only in the first of these ways, 
\begin{equation*}  \|\e^{tA}f \| \le \|\e^{tA}|f|\| \le  \|\e^{tB}|f|\|  \le K_0\e^{-\kappa t}\|f\|, \qquad f\in \mathfrak C_0(a,b).\end{equation*}

Finally, both $k_*$ and $\ell_*$ are easily checked to belong to the kernel of $A$. It follows that $\e^{tA}k_*=k_*$ and $\e^{tA}\ell_*=\ell_*$ for all $t\ge 0$,  and thus for any $f\in \mathfrak C[a,b]$ we can write 
\[ \|\e^{tA}f -Pf\| = \|\e^{tA}(f -Pf)\|  \le K_0\e^{-\kappa t}\|f-Pf\| \le (1+\|P\|)K_0\e^{-\kappa t}\|f\|,\]  
because $Pk_*=k_*$ and $P\ell_*=\ell_*$ (in fact, $P$ is a projection on the subspace spanned by $k_*$ and $\ell_*$) and $f-Pf$ belongs to $\mathfrak C_0(a,b)$. This shows the desired estimate with $K\coloneqq (1+\|P\|)K_0$.
 \end{proof}

We complete the paper with a couple of words on the special case where $a=-b$. Our point is that then the semigroup $\sem{A}$ is of very special form, being build of two simpler semigroups, and this sheds some extra light on Proposition \ref{pro:cito1}.

To begin with, we consider the following abstract situation. Suppose that a Banach space $\mathfrak C$ is the direct sum of two of its closed subspaces: $\mathfrak C = \mathfrak C_1 \oplus \mathfrak C_2$. In particular, for each $f\in \mathfrak C$ there are unique vectors $Q_1f\in \mathfrak C_1$ and $Q_2 f \in \mathfrak C_2$ such that $f= Q_1f + Q_2 f$. Suppose furthermore that the subspaces $\mathfrak C_i$ are isomorphic to Banach spaces $\mathfrak B_i$, so that there are isomorphisms $J_i\colon \mathfrak C_i \to \mathfrak B_i$, $i=1,2$.   In this scenario, given two semigroups $\rod{S_1(t)}$ and $\rod{S_2(t)}$, defined in $\mathfrak C_1$ and $\mathfrak C_2$, respectively, 
one can define a semigroup $\rod{S(t)}$ in $\mathfrak C$ by the formula 
\[ S(t) \coloneqq \sum_{i=1}^2 J_i^{-1} S_i(t) J_i Q_i, \qquad t \ge 0.  \]
Indeed, the family $\rod{S(t)}$ is obviously strongly continuous, and to prove the semigroup property it suffices to use the relations
\begin{equation}\label{relations} Q_i J_i^{-1} = J_i^{-1}, \quad i=1,2 \mquad{ and } Q_1J_2^{-1} = Q_2 J_1^{-1} = 0.\end{equation}
The following lemma provides a natural characterization of the generator of $\rod{S(t)}$.

\begin{lemma}The domain of the generator, say, $G$, of $\rod{S(t)}$ is given by 
\[ \dom{G}= \{f\in \mathfrak C\colon J_i Q_i f \in \dom{G_i}, i =1,2\}\]
where $G_i$ is the generator of $\rod{S_i(t)}, i=1,2$. Also, for $f \in \dom{G}$, $Gf = \sum_{i=1}^2 J_i^{-1} G_i J_i Q_i$. \end{lemma}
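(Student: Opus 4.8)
The plan is to verify directly that the family $\rod{S(t)}$ admits the proposed operator as its generator by exhibiting the resolvent of $G$ and matching it with the resolvents of the $G_i$, or—more elementarily—by differentiating $S(t)f$ at $t=0$ and using the decomposition $\mathfrak C = \mathfrak C_1 \oplus \mathfrak C_2$. I would favour the differentiation approach, since it is the most transparent and relies only on the relations \eqref{relations}. First I would fix $f \in \mathfrak C$ and apply $Q_j$ to $S(t)f$; using $Q_j J_i^{-1} = \delta_{ij} J_i^{-1}$ from \eqref{relations} one gets $Q_j S(t) f = J_j^{-1} S_j(t) J_j Q_j f$, so that the decomposition of $\mathfrak C$ is preserved by the semigroup and the action of $S(t)$ on the $j$-th component is governed solely by $S_j(t)$ acting on $J_j Q_j f \in \mathfrak B_j$.

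Next I would establish the inclusion $\supseteq$: if $J_i Q_i f \in \dom{G_i}$ for $i=1,2$, then $t^{-1}(S(t)f - f) = \sum_{i=1}^2 J_i^{-1} \bigl( t^{-1}(S_i(t) - I) J_i Q_i f \bigr)$, where we used $Q_i f = J_i^{-1} J_i Q_i f$ and $J_i^{-1}$ is bounded; letting $t \to 0+$, each summand converges to $J_i^{-1} G_i J_i Q_i f$ because $J_i^{-1}$ is continuous and $J_i Q_i f$ lies in $\dom{G_i}$. Hence $f \in \dom{G}$ and $Gf = \sum_{i=1}^2 J_i^{-1} G_i J_i Q_i f$. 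For the reverse inclusion $\subseteq$, suppose $f \in \dom{G}$; then $t^{-1}(S(t)f - f)$ converges, and applying the bounded operator $J_j Q_j$ and using the first step, $J_j Q_j \cdot t^{-1}(S(t)f - f) = t^{-1}(S_j(t) - I) J_j Q_j f$ converges as well, which is precisely the statement that $J_j Q_j f \in \dom{G_j}$.

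The main obstacle—if there is one—is purely bookkeeping: one must be careful that $J_i$ and $J_i^{-1}$ are bounded isomorphisms so that limits may be passed through them freely, and that the relations in \eqref{relations} are invoked in the right order (in particular that $Q_i$ restricted to $\mathfrak C_i$ is the identity, so $Q_i J_i^{-1} = J_i^{-1}$, and that the cross terms vanish). A minor subtlety is that $J_j Q_j$ maps $\mathfrak C$ into $\mathfrak B_j$ and is bounded, which is what legitimizes commuting it with the limit defining $Gf$; this is immediate from boundedness of $Q_j$ (a consequence of the closed graph theorem, since $\mathfrak C = \mathfrak C_1 \oplus \mathfrak C_2$ is a topological direct sum of closed subspaces) and of $J_j$. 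No hard analysis is needed beyond this; the whole proof is a few lines once the two inclusions are set up symmetrically.
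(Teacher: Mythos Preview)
Your proposal is correct and follows essentially the same route as the paper: both arguments establish the two inclusions by differentiating $S(t)f$ at $t=0$, using the relations \eqref{relations} to split the difference quotient into its $\mathfrak C_1$- and $\mathfrak C_2$-components and then passing the bounded operators $J_i^{-1}$ and $J_jQ_j$ through the limit. Your write-up is a bit more explicit about why $Q_j$ is bounded and why the cross terms vanish, but the underlying idea is identical.
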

\begin{proof} If $\lim_{t\to 0+}\frac 1t (S(t)f-f) $ exists for some $f$, then, by \eqref{relations}, so do the limits $\lim_{t\to 0+}\frac 1t J_i^{-1} (S_i(t) J_i Q_i-J_iQ_if), i=1,2.$ Since, however, $J_i$s are isomorphisms, this means that $J_iQ_if$ belongs to $\dom{G_i}, i=1,2$. Vice versa, 
if $J_i Q_i f \in \dom{G_i}, i =1,2$ then a straightforward calculation shows that the limit $\lim_{t\to 0+}\frac 1t (S(t)f-f) $ exists, and equals $\sum_{i=1}^2 J_i^{-1} G_i J_i Q_i$. \end{proof}

Returning to the semigroup $\sem{A}$, we note that $\mathfrak C[-b,b]$ is the direct sum of its subspaces of odd and even functions, denoted in what follows $\mathfrak C_1 $ and $\mathfrak C_2$, respectively, to comply with the notation introduced above. Moreover, the natural projection $Q_1$ maps a function $f\in \mathfrak C[-b,b]$ to its odd part $Q_1f$ given by $Q_1f (x) \coloneqq \frac 12 (f(x) -f(-x)), x \in [-b,b]$. Similarly, the projection $Q_2$ maps an $f$ to its even part defined by $Q_2f(x) \coloneqq \frac 12 (f(x) + f(-x)), x \in [-b,b]$.  Furthermore, $\mathfrak C_1$ is (isometrically) isomorphic to the space $\mathfrak C_0(0,b]$ of continuous functions on $[0,b]$ that vanish at $x=0$; the natural isomorphism $J_1\colon \mathfrak C_1\to \mathfrak C_0(0,b]$ is given by $J_1f(x) = f(x), x \in [0,b]$. Similarly, $\mathfrak C_2$ is (isometrically) isomorphic to the space $\mathfrak C[0,b]$ of continuous functions on $[0,b]$; the natural isomorphism $J_2\colon \mathfrak C_2\to \mathfrak C[0,b]$ is given by $J_2f(x) = f(x), x \in [0,b]$.

To continue, let $G_1$ be the operator defined in $\mathfrak C_0(0,b]$ as follows: its domain is composed of twice continuously differentiable functions on $[0,b]$ satisfying $f(0)=f''(0)=f''(b)=0$, and for such $f$ we let $G_1 f= \frac 12 f''$. Also, let the domain of $G_2$ be the set of $f\in \mathfrak C[0,b]$ such that $f'(0)=\gamma f(0)$ and $f''(b)=0$, and for $f$ in this domain let $G_2f= \frac 12f''.$   
\begin{proposition}The semigroup generated by $A$ has the following representation:
\begin{equation}\label{blokowa} \e^{tA} \coloneqq \sum_{i=1}^2 J_i^{-1}\e^{tG_i} J_i Q_i, \qquad t \ge 0.  \end{equation}\end{proposition}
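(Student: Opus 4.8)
The plan is to verify that the right-hand side of \eqref{blokowa} is exactly the semigroup $\sem{A}$ by invoking the abstract lemma just proved, together with uniqueness of a semigroup determined by its generator. Concretely, the abstract lemma tells us that the family $\sum_{i=1}^2 J_i^{-1}\e^{tG_i}J_iQ_i$ is a strongly continuous semigroup whose generator $G$ has domain $\{f\in \mathfrak C[-b,b]\colon J_iQ_if\in \dom{G_i},\,i=1,2\}$ and acts by $Gf=\sum_{i=1}^2 J_i^{-1}G_iJ_iQ_if$. So it suffices to show $G=A$, i.e. that $\dom{G}=\dom{A}$ and that the two operators agree on this common domain. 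Once that identification is made, the equality \eqref{blokowa} follows because two $C_0$-semigroups with the same generator coincide (and by Theorem \ref{thm:1} we already know $A$ is a Feller generator, hence a fortiori a $C_0$-semigroup generator).

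First I would unwind what $J_iQ_if\in \dom{G_i}$ means in terms of $f$ itself. Since $Q_1f(x)=\tfrac12(f(x)-f(-x))$ is the odd part and $J_1$ is just restriction to $[0,b]$, the condition $J_1Q_1f\in \dom{G_1}$ says that $x\mapsto \tfrac12(f(x)-f(-x))$ is twice continuously differentiable on $[0,b]$ and vanishes together with its second derivative at $0$ and has vanishing second derivative at $b$; the vanishing at $0$ of the odd part is automatic, and $(f(x)-f(-x))''|_{x=0}=0$ is automatic by oddness as well, so the real content is $C^2$ regularity of the odd part on $[0,b]$ and the endpoint condition there. Similarly $J_2Q_2f\in \dom{G_2}$ forces the even part $x\mapsto \tfrac12(f(x)+f(-x))$ to be $C^2$ on $[0,b]$, to satisfy $f'(0+)=\gamma f(0)$ for the even part (note the even part has the same value at $0$ as $f$, while its derivative at $0$ equals $\tfrac12(f'(0+)-f'(0-))$, since $Q_2f$ is even its right derivative at $0$ is $\tfrac12(f'(0+)-f'(0-))$), and to have vanishing second derivative at $b$. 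Translating these back: $f$ being $C^2$ on both $[-b,0]$ and $[0,b]$ is equivalent to its odd and even parts both being $C^2$ on $[0,b]$; the endpoint conditions $f''(\pm b)=0$ reconstitute from the two $G_i$ conditions at $b$; the matching $f''(0+)=f''(0-)$ comes out of the fact that the even part is $C^2$ across $0$ (forcing $f''(0+)+f''(0-)$ to be the relevant even-part second derivative, which is continuous) combined with the odd part having $f''(0+)-f''(0-)=0$ forced by $G_1$'s condition $f''(0)=0$ on the odd part; and finally $f'(0+)-f'(0-)=\gamma f(0)$ is precisely $G_2$'s boundary condition $(Q_2f)'(0)=\gamma (Q_2f)(0)$ rewritten, since the odd part contributes $(Q_1f)'(0)=\tfrac12(f'(0+)+f'(0-))$ which is free. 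I would lay this dictionary out carefully so that $\dom{G}=\dom{A}$ becomes transparent; then $Gf=\tfrac12 f''$ on this domain because each $G_i$ acts as $\tfrac12(\cdot)''$ and the second derivative of $f$ is the sum of the second derivatives of its odd and even parts.

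The main obstacle, and the step deserving the most care, is exactly this bookkeeping of the junction conditions at $0$: one has to keep straight which combinations of one-sided derivatives $f'(0\pm)$ and $f''(0\pm)$ come from the odd part and which from the even part, and check that the pair of conditions imposed by $\dom{G_1}$ and $\dom{G_2}$ reassembles to the pair $f''(0+)=f''(0-)$ and $f'(0+)-f'(0-)=\gamma f(0)$ in \eqref{1:4}, with no extra constraint slipping in and none lost. A clean way to organize this is to note that the maps $f\mapsto (Q_1f,Q_2f)$ and $(f_{\mathrm{odd}},f_{\mathrm{even}})\mapsto f_{\mathrm{odd}}+f_{\mathrm{even}}$ are mutually inverse linear bijections between $\mathfrak C[-b,b]$ and $\mathfrak C_1\times \mathfrak C_2$, and that the one-sided limits at $0$ of $f$ and its derivatives decompose additively; then the equivalence of the two descriptions of the domain is a routine, if slightly tedious, check, and the identity of the actions is immediate. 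Having established $G=A$, I would conclude by citing uniqueness of $C_0$-semigroups with a given generator (see e.g. \cites{abhn,kniga,knigazcup,goldstein,pazy}), which gives \eqref{blokowa}.
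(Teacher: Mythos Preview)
Your plan is correct and follows essentially the same route as the paper: identify the generator $G$ of the block semigroup via the abstract lemma and then argue $G=A$. The paper, however, is more economical at the last step: rather than establishing the full equality $\dom{G}=\dom{A}$, it only checks the inclusion $\dom{A}\subset\dom{G}$ (together with $Gf=Af$ for $f\in\dom{A}$) and then invokes the fact that no generator can be a proper extension of another generator. This spares you the reverse inclusion, which is exactly where your bookkeeping gets delicate.

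On that bookkeeping: your claim that ``$(f(x)-f(-x))''|_{x=0}=0$ is automatic by oddness'' is not right. On $[0,b]$ the relevant quantity is the one-sided second derivative $(J_1Q_1f)''(0+)=\tfrac12\bigl(f''(0+)-f''(0-)\bigr)$, and nothing forces this to vanish a priori; it is precisely the boundary condition $f''(0)=0$ built into $\dom{G_1}$ that encodes $f''(0+)=f''(0-)$. You say as much a few lines later, so the argument still goes through, but the earlier sentence should be deleted. If you adopt the paper's one-inclusion shortcut, this issue disappears entirely, since for $f\in\dom{A}$ the matching $f''(0+)=f''(0-)$ is given and you only need to verify that it yields $(J_1Q_1f)''(0)=0$.
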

\begin{proof} We know that the right-hand side defines a semigroup, and so our task is to show that this semigroup is generated by $A$. Let $G$ be this semigroup's generator. Since $A$ is a generator also and no generator can be a proper extension of another generator, it suffices to show that $G$ extends $A$. 

So, let $f$ belong to $\dom{A}$. Then, $J_1Q_1f$ is twice continuously differentiable on $[0,b]$ as the diference  of two twice continuously differentiable functions,  we clearly have $(J_1Q_1f)(0)=0$, condition $f''(0+) = f''(0-)$ implies $(J_1Q_1f)''(0)=0$ and $f''(-b)=f''(b) =0$ implies $(J_1Q_1f)''(b)=0$. This means that $J_1Q_1f$ belongs to $\dom{G_1}$. Similarly $J_2Q_2f$ is twice continuously differentiable and conditions \eqref{1:4} imply that $(J_2Q_2f)'(0)= \gamma (J_2Q_2f)(0)$ and $(J_2Q_2f)''(b)=0$, proving that $J_2Q_2f$ belongs to $\dom{G_2}$. Finally, it is easy to check that $Gf =\frac 12 f''=Af$, completing the proof. 
\end{proof}

Fundamentally, \eqref{blokowa} reveals that $\sem{A}$ leaves the subspaces of odd and even functions invariant. Since we also can identify the images of its subspace semigroups, this sheds additional light at Proposition \ref{pro:cito1}. For, the semigroup generated by $G_1$ describes the Brownian motion on $(0,b]$ killed at $0$ and stopped at $b$.  Hence, it is easy to first guess and then prove, using standard techniques of semigroup theory that, in the operator norm and with exponential speed,
\[ \grat \e^{tG_1} f= f(b) k_1, \qquad f \in \mathfrak C_0(0,b], \] 
where $k_1(x) = \frac {x}{b}$ is the probability that the process starting at $x$ will be eventually captured at $b$. Similarly, 
the semigroup generated by $G_2$ describes the Brownian motion on $[0,b]$ with elastic boundary at $0$, stopped upon hitting $b$ for the first time, and thus 
\[ \grat \e^{tG_2} f= f(b) k_2, \qquad f \in \mathfrak C[0,b], \] 
where $k_2(x) = \frac {1+\gamma x}{1+\gamma b}, x\in [0,b].$ % is the probability that the process starting at $x$ will be eventually captured at $b$. 
Formula \eqref{blokowa} implies therefore 
\begin{align*} \grat \e^{tA} f &= \tfrac{f(b) -f(-b)}2 J^{-1}_1 k_1 +  \tfrac{f(b) +f(-b)}2 J^{-1}_2 k_2 \\
&= \tfrac 12 f(b) (J^{-1}_1 k_1 +  J^{-1}_2 k_2)  +  \tfrac 12 f(-b) (J^{-1}_2 k_2 -  J^{-1}_1 k_1),\end{align*}
and a little calculation shows that 
\begin{align*}
\tfrac 12 (J^{-1}_1 k_1 +  J^{-1}_2 k_2) (x) &= \tfrac {b+b\gamma |x| + x + b\gamma x}{2b(1+\gamma b)} = k_*(x), \\
\tfrac 12 (J^{-1}_2 k_2 -  J^{-1}_1 k_1) (x) &= \tfrac {b+b\gamma |x| - x - b\gamma x}{2b(1+\gamma b)} = \ell_*(x), \qquad x \in [-b,b],\end{align*}
establishing Proposition \ref{pro:cito1} once again.

Finally, it can be argued that $G_1$ and $G_2$ generate cosine families. A~formula analogous to \eqref{blokowa} proves that so does $A$. 

%where $J_i\colon \mathfrak C_i \to \mathfrak D_i$ are the underlying isomorphisms, and $Q_i\colon \mathfrak C \to \mathfrak C_i$ are the natural projections of $\mathfrak C$ onto $\mathfrak C_i, i =1,2$ (that is, $Q_if$s are unique elements of $\mathfrak C_i$s such that $f=Q_1 f + Q_2 f$). 
 
\bf Acknowledgment. \rm The authors would like to thank the Isaac Newton Institute for Mathematical Sciences, Cambridge, for support and hospitality during the programme \emph{Stochastic systems for anomalous diffusion}, where work on this paper was undertaken. This work was supported by EPSRC grant EP/Z000580/1. 
A.Pilipenko thanks  the Swiss National Science Foundation for partial support  of the paper (grants No IZRIZ0\_226875, No 200020\_200400, No. 200020\_192129).
%$R_{\lam,\epsilon,\varepsilon,\delta}$

\vspace{-0.6cm}
\bibliographystyle{plain}
%\bibliography{bibliografiakopia}
\def\cprime{$'$}\def\cprime{$'$}\def\cprime{$'$}\def\polhk#1{\setbox0=\hbox{#1}{\ooalign{\hidewidth \lower1.5ex\hbox{`}\hidewidth\crcr\unhbox0}}}\ifx \cedla \undefined \let \cedla = \c \fi\ifx \cyr \undefined \let \cyr = \relax \fi\ifx \cprime \undefined \def \cprime {$\mathsurround=0pt '$}\fi\ifx \prime \undefined \def \prime {'} \fi\def\polhk#1{\setbox0=\hbox{#1}{\ooalign{\hidewidth \lower1.5ex\hbox{`}\hidewidth\crcr\unhbox0}}}
% \bib, bibdiv, biblist are defined by the amsrefs package.
\begin{bibdiv}
\begin{biblist}

\bib{abhn}{book}{
      author={Arendt, W.},
      author={Batty, C. J.~K.},
      author={Hieber, M.},
      author={Neubrander, F.},
       title={Vector-{V}alued {L}aplace {T}ransforms and {C}auchy {P}roblems},
   publisher={Birkh{\"a}user},
     address={Basel},
        date={2001},
}

\bib{bass}{book}{
      author={Bass, R.~F.},
       title={Stochastic processes},
      series={Cambridge Series in Statistical and Probabilistic Mathematics},
   publisher={Cambridge University Press, Cambridge},
        date={2011},
      volume={33},
        ISBN={978-1-107-00800-7},
         url={https://doi.org/10.1017/CBO9780511997044},
}

\bib{abielecki}{article}{
      author={Bielecki, A.},
       title={Une remarque sur la m\'ethode de {B}anach--{C}acciopo\-li--{T}ikhonov},
        date={1956},
     journal={Bull. Polish Acad. Sci.},
      volume={4},
       pages={261\ndash 268},
}

\bib{kniga}{book}{
      author={Bobrowski, A.},
       title={Functional {A}nalysis for {P}robability and {S}tochastic {P}rocesses. {A}n {I}ntroduction},
   publisher={Cambridge University Press, Cambridge},
        date={2005},
        ISBN={978-0-521-83166-6; 978-0-521-53937-1; 0-521-53937-4},
         url={http://dx.doi.org/10.1017/CBO9780511614583},
}

\bib{knigazcup}{book}{
      author={Bobrowski, A.},
       title={Convergence of {O}ne-{P}arameter {O}perator {S}emigroups. {I}n {M}odels of {M}athematical {B}iology and {E}lsewhere},
   publisher={Cambridge University Press, Cambridge},
        date={2016},
}

\bib{jedenipol}{book}{
      author={Bobrowski, A.},
       title={Functional {A}nalysis {R}evisited. {A}n {E}ssay on {C}ompleteness},
   publisher={Cambridge University Press, Cambridge},
        date={2024},
}

\bib{zmarkusem2b}{article}{
      author={Bobrowski, A.},
      author={Kunze, M.},
       title={Irregular convergence of mild solutions of semilinear equations},
        date={2019},
        ISSN={0022-247X},
     journal={J. Math. Anal. Appl.},
      volume={472},
      number={2},
       pages={1401\ndash 1419},
         url={https://doi.org/10.1016/j.jmaa.2018.11.082},
}

\bib{borodin}{book}{
      author={Borodin, A.~N.},
      author={Salminen, P.},
       title={Handbook of {B}rownian motion---facts and formulae},
     edition={Second Edition},
      series={Probability and its Applications},
   publisher={Birkh\"auser Verlag, Basel},
        date={2002},
        ISBN={3-7643-6705-9},
         url={https://doi.org/10.1007/978-3-0348-8163-0},
}

\bib{edwards}{book}{
      author={Edwards, R.~E.},
       title={Functional {A}nalysis. {T}heory and {A}pplications},
   publisher={Dover Publications},
        date={1995},
}

\bib{ethier}{book}{
      author={Ethier, S.~N.},
      author={Kurtz, T.~G.},
       title={{M}arkov {P}rocesses. {C}haracterization and {C}onvergence},
   publisher={Wiley},
     address={New York},
        date={1986},
}

\bib{fellera3}{article}{
      author={Feller, W.},
       title={The parabolic differential equations and the associated semi-groups of transformations},
        date={1952},
     journal={Ann. Math.},
      volume={55},
       pages={468\ndash 519},
}

\bib{fellera4}{article}{
      author={Feller, W.},
       title={Diffusion processes in one dimension},
        date={1954},
     journal={Trans. Amer. Math. Soc.},
      volume={77},
      number={1},
       pages={1\ndash 31},
}

\bib{goldstein}{book}{
      author={Goldstein, J.~A.},
       title={Semigroups of {L}inear {O}perators and {A}pplications},
   publisher={Oxford University Press},
     address={New York},
        date={1985},
}

\bib{hillebook}{book}{
      author={Hille, E.},
       title={Functional {A}nalysis and {S}emi-{G}roups},
      series={American Mathematical Society Colloquium Publications, vol. 31},
   publisher={American Mathematical Society, New York},
        date={1948},
}

\bib{ito}{book}{
      author={It{\^o}, K.},
      author={P., {McKean, Jr.}~H.},
       title={Diffusion {P}rocesses and {T}heir {S}ample {P}aths},
   publisher={Springer},
     address={Berlin},
        date={1996},
        note={Repr. of the 1974 ed.},
}

\bib{kallenbergnew}{book}{
      author={Kallenberg, O.},
       title={Foundations of {M}odern {P}robability},
     edition={2},
   publisher={Springer},
        date={2002},
}

\bib{karatzas}{book}{
      author={Karatzas, I.},
      author={Shreve, S.~E.},
       title={Brownian {M}otion and {S}tochastic {C}alculus},
   publisher={Springer},
     address={New York},
        date={1991},
        ISBN={0-387-97655-8},
}

\bib{rosen}{book}{
      author={Marcus, M.~B.},
      author={Rosen, J.},
       title={Markov {P}rocesses, {G}aussian {P}rocesses, and {L}ocal {T}imes},
      series={Cambridge Studies in Advanced Mathematics},
   publisher={Cambridge University Press},
        date={2006},
}

\bib{pazy}{book}{
      author={Pazy, A.},
       title={Semigroups of {L}inear {O}perators and {A}pplications to {P}artial {D}ifferential {E}quations},
   publisher={Springer},
        date={1983},
}

\end{biblist}
\end{bibdiv}

\end{document}